\documentclass[11pt]{article}                  
\usepackage{graphicx}

%
%
\usepackage{amsmath,amsfonts,amssymb,amsthm}
\usepackage{natbib}
\usepackage[sans]{dsfont}
\usepackage{mathrsfs}
\usepackage{nicefrac}
%

\newtheorem{theorem}{Theorem}[section]

\newtheorem{definition}[theorem]{Definition}
\newtheorem{lemma}[theorem]{Lemma}

\newtheorem{remark}[theorem]{Remark}

\newcommand{\refp}[1]{(\ref{#1})}

\newcommand{\bs}[1]{\boldsymbol{#1}}
\newcommand{\ud}[1]{\, \mathrm{d}#1}
\newcommand{\IND}[1] {{ \mathds{1}_{ #1 }} }

\newcommand{\R}{\mathbb{R}}

\newcommand{\Dom}{\text{Dom}}

\renewcommand{\L}{\mathcal{L}}
\newcommand{\D}{\mathcal{D}}
\newcommand{\A}{\mathcal{A}}
\newcommand{\E}{\mathcal{E}}

\newcommand{\C}{\mathcal{C}}

\renewcommand{\l}{\mathsf{\ell}}

\newcommand{\bphi}{\boldsymbol{\phi}}
\newcommand{\n}{\boldsymbol{n}}

\newcommand{\vp}{\varphi}

\renewcommand{\geq}{\geqslant}

\renewcommand{\d}{\partial}

\newcommand{\deriv}[3][]{\frac{\ud^{#1} \hspace{-0.3mm} #2}{\ud{#3}^{#1}}}
\newcommand{\pderiv}[3][]{\frac{\d^{#1} \hspace{-0.1mm} #2}{\d{#3}^{#1}}}

\begin{document}

\title{Green's functions for Sturm-Liouville problems on directed tree graphs.}


\author{Jorge M Ramirez \footnote{Universidad Nacional de Colombia, Sede Medellin. email: jmramirezo@unal.edu.co}}

\maketitle

\begin{abstract}
Let $\Gamma$ be geometric tree graph with $m$ edges and consider the second order Sturm-Liouville operator $\L[u]=(-pu')'+qu$ acting on functions that are continuous on all of $\Gamma$, and twice continuously differentiable in the interior of each edge. The functions $p$ and $q$ are assumed uniformly continuous on each edge, and $p$ strictly positive on $\Gamma$. The problem is to find a solution $f:\Gamma \to \R$ to the problem $\L[f] = h$ with $2m$ additional conditions at the nodes of $\Gamma$. These node conditions include continuity at internal nodes, and jump conditions on the derivatives of $f$ with respect to a positive measure $\rho$. Node conditions are given in the form of linear functionals $\l_1,\dots,\l_{2m}$ acting on the space of admissible functions. A novel formula is given for the Green's function $G:\Gamma\times \Gamma \to \R$ associated to this problem. Namely, the solution to the semi-homogenous problem $\L[f] = h$, $\l_i[f] =0$ for $i=1,\dots,2m$ is given by $f(x) = \int_\Gamma G(x,y) h(y) \ud \rho$.   
\end{abstract}

\section{Introduction}

The Sturm-Liouville differential operator
\begin{equation}\label{SLP}
\L[f] = -(pf')' + q f  
\end{equation}
on an interval, appears in the analysis of many different types of models in the natural sciences. The problem $\L[f] =h$ or $\L[f] = \nu q f$ together with appropriate boundary conditions arise when considering Kirchoff's law in electrical circuits, the balance of tension in a elastic string, or the steady state temperature in a heated rod (see for example \cite{Kreyszig:1999fk,Hjortso:2009uq,Guenther:1996fk}). A more complete review of the mathematical theory can be found in \cite{Zettl:2005kx}.

The extension of operator \refp{SLP} to the case of a domain composed of intervals arranged in a graph has received recent attention for the last thirty years (see for example \cite{Merkov:1985vn, Roth:1984ys, VonBelow:1988p3428}. A complete bibliographical review with historical notes can be found in \cite{Pokornyi:2004p3313}.

\subsection{Physical motivation}

The application example that follows serves as the motivation for the present study, and arises from a problem in stability of populations of organisms in river networks \citep{Ramirez:2011zr}.

When considering the dispersion of solutes or organisms in a river or stream, one might consider the following advection-diffusion model \begin{equation}
\pderiv{u}{t} = D \pderiv[2]{u}{x} - v \pderiv{u}{x}.
\end{equation}
Here $u(x,t)$ is the concentration per unit length of the dispersed quantity at position $x \in [0,l]$, and time $t>0$. The coefficients $D$ and $v$ are constant and strictly positive and denote the diffusivity and water velocity respectively (see \cite{Lutscher:2006p1843} for a justification of this model). 

Consider now a mathematical model for the dispersion of the same quantity in a collection of streams arranged in river network. One then might consider a domain in the form of a tree graph $\Gamma$ where each stream in the network corresponds to an edge $e$ of $\Gamma$, and the stream junctions and boundary points are the nodes of the graph. An edge $e$ can be parametrized as the interval $(0,l_e)$, the point $x=0$ corresponding to the downstream node of edge $e$. We denote by $\Gamma$ the union of all edges, and by $\bar\Gamma$ the set composed of $\Gamma$ and the nodes of the graph. The most downstream point of the network (i.e. its outlet) is the root node of $\bar\Gamma$ and is denoted by $\bphi$. Let $u:\Gamma \to \R$ be the longitudinal concentration, with $u_e$ denoting the restriction of $u$ to edge $e$. Then $u$ satisfies the following evolution equation
\begin{equation}
\pderiv{u}{t} = \A[u], \quad u(x,0) = f(x),
\end{equation}
where the differential operator  $\A$ is given on each edge by
\begin{equation}\label{A}
\A[h]\Big|_e := D_e h'' - v_e h'.
\end{equation}
The operator $\A$ acts on functions that satisfy certain regularity conditions inside each edge, but more interestingly, one must also specify conditions at the nodes of the tree graph. For the application in river networks, an internal node $\n$ is located where edges $e_1$, $e_2$ merge to form edge $e_0$. Since all nodes are assumed to be oriented downstream, one can talk about the value of $u_e$ and $u_e'$ at node $\n$ by taking the appropriate one-sided limits. In the particular application of dispersion on $\Gamma$, one requires continuity of the concentration:
\begin{equation}\label{contNode}
 u_{e_0}(\n) = u_{e_1}(\n) = u_{e_2}(\n), \quad i,j=1,\dots,k,
\end{equation}
and a flux matching condition
\begin{equation}\label{fluxNode}
 \rho_1 u_{e_1}(\n) + \rho_2 u_{e_2}(\n) = \rho_0 u_{e_0}(\n)
\end{equation}
for some nonzero coefficients $\rho_e$, $e\subset \Gamma$. The set of nodes that have a single incident edge is called the boundary of $\Gamma$, and $\d \Gamma$ is used for its notation. The phenomena of dispersion typically imposes  Dirichlet (absorbing) boundary conditions at the root node, and Neumann (reflecting) condition at all other (upstream) nodes. Namely,
\begin{equation}\label{boundCond}
u(\bphi) = 0, \quad u'(\bs{n}) = 0, \; \bs{n} \in \d \Gamma \setminus \{\bphi\}.
\end{equation}

Consider the following integrating factors
\begin{equation}\label{pq}
p(x) := \exp \left\{ -\int_{\bphi}^x \frac{v(y)}{D(y)} \ud y \right\}, \quad q(x) = \frac{\sigma p(x)}{D(x)},
\end{equation}
where the integral on the definition of $p$ is taken along the unique path connecting the root $\bphi$ and the point $x \in \Gamma$. The functions $v$ and $D$ are defined on $\Gamma$ by taking the values $v_e$ and $D_e$ on edge $e$ respectively. Calculation of the resolvent of $\A$, involves inverting the operator $(\sigma - \A)$ for an arbitrary $\sigma >0$. For $\A$ given in \refp{A}, it follows that 
\begin{equation}\label{defL}
\L := \frac{p}{D}(\sigma - \A)
\end{equation}
has the familiar form of a Sturm-Luiville operator on each edge,
\begin{equation}
\L[f]_e = -(pf_e')' + qf_e.
\end{equation}
One is then interested in solving the problem
\begin{equation}\label{Lhf}
\L[f] = h, \quad f \in \E(\Gamma)
\end{equation}
where $\E(\Gamma)$ is the set of functions that are twice continuously differentiable inside each edge, satisfy internal node conditions \refp{contNode} and \refp{fluxNode}; and boundary conditions \refp{boundCond}. It can easily be shown that problem \refp{Lhf} has a unique solution if and only if it is non-degenerate, that is the only solution to the homogenous problem $\L[f] = 0$, $f \in \Dom(\L)$ is $f\equiv 0$.

Let $\mathcal G$ be some right inverse mapping of $\L$ namely $\L[\mathcal G[f]] = f$ for all admissible $f$. Then $\mathcal G$ is the Green's operator for problem problem \refp{Lhf}. Moreover, it will be shown that:

\begin{theorem}\label{ThmMain}
If problem \emph{(\ref{Lhf})} is non-degenerate, it has a unique Green's function. Namely, there exists a function $G:\Gamma \times \Gamma \to \R$ such that the solution to \emph{\refp{Lhf}}, $f = \mathcal{G}[h]$, is $f(x) = \int_\Gamma G(x,y) h(y) \ud y.$
\end{theorem}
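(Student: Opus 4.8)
The plan is to construct the Green's function explicitly by reducing the problem on the tree to a collection of coupled two-point boundary value problems on the individual edges, and then to patch the edge-wise solutions together using the node conditions. First I would recall the classical construction on a single interval: for a non-degenerate Sturm-Liouville problem one chooses two linearly independent solutions $\vp_1,\vp_2$ of the homogeneous equation $\L[u]=0$, one satisfying the left boundary data and the other the right, and forms the Green's kernel from their product divided by the (constant, by Abel's identity) quantity $p$ times the Wronskian $W(\vp_1,\vp_2)$. The method of variation of parameters then gives $\L[f]=h$ on an interval with $f(x)=\int G(x,y)h(y)\ud y$, where $G(x,y)$ is built piecewise from $\vp_1(x)\vp_2(y)$ and $\vp_1(y)\vp_2(x)$ according to whether $x<y$ or $x>y$. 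I expect to adapt this to each edge $e$ of $\Gamma$, obtaining a fundamental system $\{\vp_{e,1},\vp_{e,2}\}$ on every edge.

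The core of the argument is linear algebra on the tree. Because $\Gamma$ has $m$ edges, a general edgewise solution of $\L[u]=0$ lives in a $2m$-dimensional space, with two free constants per edge. The $2m$ linear functionals $\l_1,\dots,\l_{2m}$ encoding the continuity conditions \refp{contNode}, the flux-matching conditions \refp{fluxNode}, and the boundary conditions \refp{boundCond} impose exactly $2m$ linear constraints. I would assemble these into a $2m\times 2m$ matrix $M$ whose entries are the values of the functionals applied to the basis solutions, and observe that non-degeneracy of \refp{Lhf} is precisely the statement that the homogeneous problem has only the trivial solution, hence that $M$ is invertible. This is the step where the hypothesis of Theorem~\ref{ThmMain} enters decisively, and it is what guarantees both existence and uniqueness of the right inverse $\mathcal G$.

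With invertibility of $M$ in hand, I would solve $\L[f]=h$ edgewise by variation of parameters, which produces a particular solution on each edge up to the two homogeneous constants per edge; the node functionals $\l_i[f]=0$ then become an inhomogeneous linear system $M\bs{c} = \bs{b}(h)$ for the vector $\bs{c}$ of constants, where the right-hand side $\bs{b}(h)$ depends linearly on $h$ through the particular solutions. Solving $\bs{c}=M^{-1}\bs{b}(h)$ and collecting the dependence on the source variable $y$ into a single kernel yields the global Green's function $G(x,y)$ on $\Gamma\times\Gamma$, and linearity in $h$ gives the integral representation $f(x)=\int_\Gamma G(x,y)h(y)\ud y$. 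Uniqueness follows because any two Green's functions would give the same $f=\mathcal G[h]$ for every admissible $h$, forcing their difference to integrate to zero against all $h$ and hence to vanish.

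The main obstacle I anticipate is bookkeeping rather than conceptual: correctly tracking the orientation of edges (downstream parametrization with $x=0$ at the downstream node), ensuring the one-sided limits used in the functionals $\l_i$ are consistent across merging edges, and verifying that the assembled kernel is genuinely symmetric in the appropriate sense and satisfies the jump conditions with respect to the measure $\rho$ at each node. The delicate point is to show that the piecewise-defined $G$, built separately on each edge and then glued, inherits exactly the jump in its $\rho$-derivative dictated by \refp{fluxNode} while remaining continuous where \refp{contNode} demands; handling this cleanly for an arbitrary tree topology, rather than a fixed small example, is where the argument requires the most care.
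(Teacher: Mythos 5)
Your proposal is correct in outline, but it follows a genuinely different route from the paper's. What you describe --- edgewise variation of parameters to get a particular solution, then a global $2m\times 2m$ linear system $M\bs{c}=\bs{b}(h)$ for the homogeneous correction constants, with the kernel extracted by tracking the linear dependence on $h$ --- is essentially the construction of \cite{Pokornyi:2004p3263}, which the paper cites and explicitly contrasts with in its closing remark: it yields $G(x,y)=H(x,y)-\sum_{i=1}^{2m}\l_i[H(\cdot,y)]\eta_i(x)$, where $H$ is the diagonal edgewise kernel and the $\eta_i$ solve $\L[\eta_i]=0$, $\l_j[\eta_i]=\delta_{ij}$. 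The paper instead exploits the tree structure: cutting $\bar\Gamma$ at a point $(e,x)$ produces exactly two components $\Gamma(e,x)$ and $\Lambda(e,x)$, and one constructs two homogeneous solutions $\psi^{\Gamma(e)}$, $\psi^{\Lambda(e)}$, each vanishing at the far boundary nodes of its subtree and normalized by $pW[\psi^{\Gamma(e)},\psi^{\Lambda(e)}]=-1$; integrating Lagrange's identity over each subtree (with the node conditions forcing all interior boundary terms to cancel) gives the product-form kernel $G(x,y)=\rho_e^{-1}\psi^{\Gamma(e)}(y)\psi^{\Lambda(e)}(x)$ for $y\in\Gamma(e,x)$, and symmetrically otherwise --- a direct generalization of the classical two-solution interval formula. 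Your approach buys generality (it works verbatim on graphs with cycles, and does not need the dichotomy of ``two sides'' of a point), while the paper's buys a simpler, cheaper formula: only two special solutions per edge rather than $2m$ solves of the system, and a kernel in separated-variable form. Two small points you would need to make rigorous: the interchange $\l_i\bigl[\int H(\cdot,y)h(y)\ud y\bigr]=\int \l_i[H(\cdot,y)]h(y)\ud y$ (differentiation under the integral across the diagonal jump of $H$), and your uniqueness argument, which requires knowing that the admissible right-hand sides $h$ are dense enough --- in the paper this is handled by proving $\text{Ran}(\L)=\C(\Gamma)$ before concluding that two kernels integrating identically against all such $h$ must coincide.
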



The proof of the existence of the Green's function $G$ defined in Theorem \ref{ThmMain} follows from standard arguments. Here, the proof is obtained by simply giving an explicit formula for $G(x,y)$. Uniqueness of the Green's function follows from the hypothesis of non-degeneracy.   

In \cite{Pokornyi:2004p3263} the authors provide a proof of existence, uniqueness and a formula for $G$ for general graphs. The goal here is to present a new formula, both simpler and less expensive to compute, for the Green's function in the case $\Gamma$ is a tree graph. The techniques here are elementary and based on the classical Lagrange's method for Sturm-Liuoville problems (see for example \cite{Guenther:1996fk}).

The organization is as follows. The next section settles the notation and defines the class of Sturm-Liouville problems to be considered. Finally, section \ref{SectConstG} is devoted to the construction and the formula for the the Green's function.

\section{Sturm-Liouville problems on tree graphs}

\subsection{Tree graphs and functions}

By a tree graph we understand a finite collection of edges embedded in $\R^2$, joined with nodes and containing no loops. That is, for any two points $x,y$ in the graph, there exists one single path through the graph joining them. 
We assume that each edge $e$ of the graph allows a sufficiently smooth parametrization, contains no self-intersections, and is finite, therefore can be considered as the interval $e = (0,l_e)$. The collection of all graphs is denoted by $\Gamma$.  At each endpoint of an edge is located a node of $\Gamma$. The set of nodes is $N(\Gamma)$ and boldface is used to denote individual nodes. The graph, including its nodes, is denoted as $\bar\Gamma := \Gamma \cup N(\Gamma)$.

Points in $\Gamma$ are denoted by the pair $(e,x)$ with $0 < x < l_e$, or by single letters if specification of the edge is not crucial.  If $\n$ is a node, let $i(\n)$ denote the set of incident edges at $\n$, namely those for which $\n$ is an endpoint. Boundary nodes are those $\n$ with $\#i(\n)=1$. The set of all boundary nodes of $\Gamma$ is $\d \Gamma$. The set of internal nodes is $I(\Gamma) = N(\Gamma) \setminus \d \Gamma$.
Node $\n$ has $\#i(\n)\geq 1$ possible representations: for each $e \in i(\n)$, one either has $\n = (e,0)$ or $\n = (e,l_e)$. The representation of points in $\bar\Gamma$ is therefore dependent on the parametrization direction of its edges.

The value of a function $f:\Gamma \to \R$ at a point in $\Gamma$ is denoted as $f_e(x) = f(e,x)$. That is, $f_e$ is the restriction of $f$ to the edge $e$. For a node $\n \in \d \Gamma$ located at the endpoint of some edge $e$, $f(\n)$ denotes the appropriate one-sided limit of $f_e$. For an internal node $\n$ with $i(\n) = \{e_1,\dots,e_n\}$ the value $f_{e_1}(\n)$ denotes the one sided limit of $f_{e_i}$ as $x$ approaches the endpoint of $e_i$ at which $\n$ is located, $i=1,\dots,n$. If all these limits coincide, $f$ is said to be continuous at $\n$, and $f(\n)$ is defined as the common value.

We must also differentiate functions given on $\Gamma$. For a point $(e,x) \in \Gamma$ with $0<x<l_e$, the derivative $f'(e,x)=f'_e(x)$ is computed as the usual derivative of the restriction $f_e$ at $x$ according to the particular parametrization direction of $e$. A change in the orientation of the parametrization of the edge implies a sign change on $f_e'$. Note that the sign of $f_e''$ or $(pf_e')'$ remains unchanged. For a node $\n$ located at an endpoint of edge $e$, we introduce the \emph{boundary derivative} $f'^b_e$ as the derivative ``out of node $\n$ into edge $e$'': as if the parametrization of $e$ has $\n = (e,0)$. Boundary derivatives are useful because they make the following equality hold
\begin{equation}
\int_e (pf')' \ud x = p(l_e) f'^b(l_e) - p(0)f'^b(0)
\end{equation}
regardless of whether the integral is computed from $0$ to $l_e$, or from $l_e$ to $0$.

The space of functions that are $n$ times continuously differentiable in $\Gamma$, is denoted by $\C^n(\Gamma)$, $n=0,1,2,\dots$; $\C(\Gamma) := \C^0(\Gamma)$. Clearly, such spaces are identifiable with direct sums of the form $\bigoplus_{e} \C^n(0,l_e)$. The set $\C(\bar \Gamma)$ is composed of functions in $\C(\Gamma)$ that are also continuous at each node.

\subsection{Sturm-Liouville operators}\label{SecGeneralSL}

Let $p,q \in \C(\Gamma)$ be bounded with $\inf_{\Gamma} p(x) >0$. The object of this study is the following differential operator 
\begin{equation}\label{normalL}
\L[f]  := -(pf')' + qf, \quad f \in \D_p^2(\Gamma),
\end{equation}
where $\D_p^2(\Gamma)$ denotes the space of functions $f\in \C(\Gamma)$ such that $(pf')' \in \C(\Gamma)$. 

Green's functions for Sturm-Liouville operators are useful for solving more general problems than the one outlined on the introduction. For this more general treatment we follow \cite{Pokornyi:2004p3263}. 

If $\Gamma$ contains $m$ edges, then the dimension of $\mathcal N(\L) := \{u:\in \D^2_p(\Gamma);\; \L[u]=0\}$ is $2m$. A basis $\{\vp_1,\dots,\vp_{2m}\}$ for $\mathcal{N}(\L)$ can be found as follows. Let $e$ be the $i$-th edge and define $\vp_{2i-1}$ and $\vp_{2i}$ as the solutions to $-(p_e f')' + q_e f = 0$ on $e$ satisfying 
$$\vp_{2i-1}(0) = 1,\;\vp_{2i-1}'(0) = 0, \quad \vp_{2i}(0) = 0,\;\vp_{2i}'(0) = 1$$
extended to all of $\Gamma$ via $\vp_{2i-1}(\tilde e,x) = \vp_{2i}(\tilde e,x) = 0$, $0<x<l_{\tilde e}$ for all $\tilde e \neq e$.
Consider now a collection of $2m$ linear functionals $\{\l_i,\dots,\l_{2m}\}$ defined on $\D_p^2(\Gamma)$. The problem 
\begin{equation}\label{Lhf2}
f\in \D_p^2(\Gamma), \quad \L[f] =h, \quad \l_i[f] = c_i, \; i=1\dots,2m 
\end{equation}
will be uniquely solvable if and only if the homogenous problem
\begin{equation}\label{homo}
f\in \D_p^2(\Gamma), \quad \L[f] =0, \quad \l_i[f] = 0, \; i=1\dots,2m 
\end{equation}
has no solution except the trivial solution $f \equiv 0$. In this case we say that problem \refp{Lhf2} is \emph{non-degenerate}.

Non-degeneracy can be characterized as follows. Let $\Delta$ be the matrix defined by $\Delta_{i,j} = \l_i[\vp_j]$, $i,j=1,\dots,2m$. Non-degeneracy is therefore equivalent to $\det(\Delta) \neq 0$. In this case, the solution to problem \refp{Lhf2} can be written explicitly. Let $z$ be some solution to the semi-homogeneous problem
\begin{equation}\label{semihomo}
z\in \D_p^2(\Gamma), \quad \L[z] = h, \quad \l_i[z] = 0, \; i=1\dots,2m 
\end{equation}
then the solution $f = z+\sum_{i=1}^{2m} a_i \vp_i$ to problem \refp{Lhf2} must satisfy

\begin{equation}
\left[\begin{array}{cccc}
1			&\vp_1	&\dots	&\vp_{2m}   \\ \cline{2-4}
0			&\multicolumn{3}{|c|}{} \\ 
\vdots		&\multicolumn{3}{|c|}{\Delta}\\
0		&\multicolumn{3}{|c|}{}\\ \cline{2-4}
\end{array}\right]\left[ \begin{array}{c}
f \\ -a_1 \\ \vdots \\ -a_{2m}
\end{array}\right] 
=
\left[ \begin{array}{c}
z \\ -c_1 \\ \vdots \\ -c_{2m}
\end{array}\right].
\end{equation}
Hence, Cramer's rule gives the useful formula
\begin{equation}\label{homoSolved}
f = \frac{1}{\det(\Delta)}\,\det\!
\left[\begin{array}{cccc}
z		&\vp_1	&\dots	&\vp_{2m}   \\ \cline{2-4}
-c_1		&\multicolumn{3}{|c|}{} \\ 
\vdots		&\multicolumn{3}{|c|}{\Delta}\\
-c_{2m}		&\multicolumn{3}{|c|}{}\\ \cline{2-4}
\end{array}\right].
\end{equation}

\subsection{The physical problem}

Motivated by physical applications,  we now specialize to semi-homogenous Sturm-Liouville problems  where the functionals $\{\l_i: i=1,\dots,2m\}$ correspond to a particular choice of conditions at the nodes of $\bar\Gamma$.

Consider the operator $\L[f]$ acting on the set
\begin{equation}\label{E}
\E(\Gamma) = \D_p^2(\Gamma) \cap \C(\bar \Gamma)  \cap F_\rho(\Gamma) \cap B_D(\Gamma).
\end{equation}
Where $B_D(\Gamma)$ and $F_\rho(\Gamma)$ contain the boundary and weighted flux matching conditions respectively:
\begin{eqnarray}
B_D &=& \left\{f \in \C(\bar\Gamma): f(\n) = 0, \: \n\in \d\Gamma  \right\}\\ \label{BD}
F_\rho(\Gamma) &=& \left\{f \in \C^1(\Gamma): \sum_{e \in(\n)} \rho_{e} f_{e}'^b(\bs n) = 0, \: \n\in I(\Gamma)  \right\}. \label{Frho}
\end{eqnarray}
The function $\rho$ in \refp{Frho} is assumed constant on edges and strictly positive. Other boundary conditions types than Dirichlet -- like those in \refp{boundCond}-- can be considered without major changes to the arguments that follow. 

The conditions encoded in $\E(\Gamma)$ can be cast in terms of linear functionals: let $\n \in I(\Gamma)$  with $i(\n) = \{e_1,\dots,e_k\}$, and define the functionals  
\begin{eqnarray}
\tilde \l_{\n,i}[f] &=& f_{e_{i+1}}(\n) - f_{e_i}(\n), \quad i=1,\dots,k-1,\\ \label{funcCont}
\tilde \l_{\n,k}[f] &=& \sum_{i=1}^k \rho_{e_i} f_{e_i}'^b(\bs n). \label{funcFlux}
\end{eqnarray}
For a boundary node $\n$ located at the endpoint of edge $e$, define simply
\begin{equation} \label{funcBD}
\tilde \l_{\n}[f] = f_e(\n).
\end{equation}
Relabeling gives a collection of $2m$ functionals $\{\l_1,\dots,\l_{2m}\}$, such that the problem of finding $f \in \E(\Gamma)$ satisfying $\L[f] = h$ can be written as the semi-homogenous problem \refp{semihomo}.


\section{Construction of the Green's function}\label{SectConstG}
The goal is to arrive at a formula for the solution to problem \refp{semihomo}. The solution to the associated non-homogenous problem will then follow from \refp{homoSolved}.

\begin{definition}\label{DefG}
A Green's function for operator $\L$ is a function $G:\Gamma \times \Gamma \to \R$ such that for all $h \in \text{Ran}(\L)$, the solution to problem \refp{semihomo} is given by 
\begin{equation}
f(x) = \int_\Gamma G(x,y) h(y) \ud y.
\end{equation}
The operator $\mathcal G:h \mapsto \int_\Gamma G(x,y) h(y) \ud y$ is called the Green's operator.
\end{definition}

The first step is elementary and consists on verifying properties of the Wronskian of functions on $\Gamma$. For $f,g \in \C^1(\Gamma)$ the Wronskian $W[f,g]$ is defined on an edge $e$ of $\Gamma$ as
\begin{equation}
W[f,g]_e = f_eg_e'-g_ef_e'
\end{equation}

\begin{lemma} \label{LemmaW} Let $f,g,h$ be functions in $\C^1(\Gamma)$.
\begin{enumerate}
\item \label{a} If $f,g \in \D_p^2(\Gamma)$ then Lagrange's identity holds on each edge $e$,
\begin{equation}\label{Lagranges}
f_e\L[g]_e - g_e \L[f]_e = -\deriv{}{x}(p_eW[f,g]_e).
\end{equation}
\item \label{b} If $f,g \in B_D$, then $W[f,g] \in B_D(\Gamma)$.
\item If $f,g \in \C(\bar\Gamma) \cap F_\rho(\Gamma)$, then $\sum_{e\in i(\n)} \rho_{e} W[f,g]_e(\bs n) =0$ for all $\n \in I(\Gamma)$. Here, the derivatives in the definition $W$ at $\n$ are replaced by boundary derivatives.
\item $hW[f,g] - fW[h,g] = gW[f,h]$.
\item If $f,g \in \D_p^2(\Gamma)$ with $\L[f] = \L[g] = 0$ on some edge, then $pW[f,g]$ is constant there.
\end{enumerate}
\end{lemma}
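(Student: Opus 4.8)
The five claims are all consequences of the defining formula $W[f,g]_e = f_e g_e' - g_e f_e'$ together with the product rule, so the plan is to dispatch them in an order that lets later parts reuse earlier ones. I would begin with the Lagrange identity (a), since (e) is an immediate corollary. Expanding $f_e\L[g]_e - g_e\L[f]_e$ with $\L[f] = -(pf')' + qf$, the $q$-terms cancel and one is left with $g_e(p_ef_e')' - f_e(p_eg_e')'$. Differentiating $p_eW[f,g]_e = f_e(p_eg_e') - g_e(p_ef_e')$ by the product rule, the symmetric terms $p_ef_e'g_e'$ cancel, giving $\deriv{}{x}(p_eW[f,g]_e) = f_e(p_eg_e')' - g_e(p_ef_e')'$, which is exactly the negative of the previous expression. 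This proves (a), and (e) follows at once: if $\L[f]_e = \L[g]_e = 0$ on an edge, the left-hand side of \refp{Lagranges} vanishes, so $p_eW[f,g]_e$ has zero derivative and is constant there.

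The algebraic identity (d) I would prove by direct expansion. Writing out $hW[f,g]$, $fW[h,g]$, and $gW[f,h]$ in terms of the products $fg'$, $gf'$, $hg'$, $gh'$, the terms containing $hfg'$ cancel between the first two, and the survivors reassemble into $g(fh' - hf') = gW[f,h]$. No hypotheses beyond $f,g,h \in \C^1(\Gamma)$ are used here.

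For the node conditions I would work node by node. Part (b) is immediate: at a boundary node $\n \in \d\Gamma$ with incident edge $e$, the hypothesis $f,g \in B_D$ gives $f_e(\n) = g_e(\n) = 0$, so $W[f,g]_e(\n) = f_e(\n)g_e'^b(\n) - g_e(\n)f_e'^b(\n) = 0$, whence $W[f,g] \in B_D(\Gamma)$. For (c), the crucial observation is that continuity of $f$ and $g$ at an internal node $\n$ makes $f_e(\n)$ and $g_e(\n)$ independent of the incident edge $e \in i(\n)$, equal to the common values $f(\n)$ and $g(\n)$. Using the boundary-derivative form of the Wronskian as the statement instructs, I can then factor these common values out of the sum:
\begin{equation*}
\sum_{e\in i(\n)} \rho_{e} W[f,g]_e(\bs n) = f(\n)\!\!\sum_{e\in i(\n)}\!\! \rho_e\, g_e'^b(\bs n) \;-\; g(\n)\!\!\sum_{e\in i(\n)}\!\! \rho_e\, f_e'^b(\bs n).
\end{equation*}
Both sums on the right are precisely the weighted-flux functional from the definition of $F_\rho(\Gamma)$ applied to $g$ and to $f$, and each vanishes because $f,g \in F_\rho(\Gamma)$.

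The one genuine subtlety — and the place I would be most careful — is the orientation bookkeeping in (c). Since each edge carries its own parametrization direction, an ordinary derivative evaluated at a node would acquire a sign depending on whether $\n$ is the $x=0$ or $x=l_e$ endpoint of $e$; the lemma avoids this by evaluating $W$ at $\n$ through the boundary derivatives $f_e'^b$, oriented uniformly ``out of the node into the edge.'' The factorization above is legitimate exactly because the \emph{same} boundary-derivative convention defines the flux condition in $F_\rho(\Gamma)$, so the two match and cancel cleanly. I would therefore state this convention explicitly before computing, since it is the hinge on which the argument turns; everything else reduces to the product rule and direct expansion.
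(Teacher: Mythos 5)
Your proposal is correct and follows essentially the same route as the paper: direct expansion via the product rule for (a) and (d), vanishing of the function values at boundary nodes for (b), factoring out the common node values and invoking the flux condition in $F_\rho(\Gamma)$ for (c), and constancy of $p_eW[f,g]_e$ for (e) (the paper recomputes this directly from the ODE, while you deduce it from (a) — the same calculation). One small point in your favor: your sign in the factorization for (c), $f(\n)\sum_e \rho_e g_e'^b(\n) - g(\n)\sum_e \rho_e f_e'^b(\n)$, is the correct one; the paper's displayed formula has a $+$ there, a harmless typo since both sums vanish anyway.
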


\begin{proof}
Statement (\textit{a}) follows from a simple calculation and (\textit{b}) is obvious. For (\textit{c}), it suffices to use continuity and change derivatives to boundary derivatives,
$$\sum_{e\in i(\n)} \rho_{e} W[f_e,g_e](\bs n) = f(\n) \sum_{e\in i(\n)} \rho_{e} g_e'^b(\bs n) + g(\n) \sum_{e\in i(\n)} \rho_{e} f_e'^b(\bs n) = 0.$$
 (\textit{d}) is obtained by rearranging terms. To prove (\textit{e}), compute $(p_eW[f,g]_e)' = f_e(p_eg_e')' - g_e(p_e f_e')'$, use $(p_ef_e')'=-q_ef_e$ and $(p_eg_e')'=-q_eg_e$ to finally get $(p_eW[f,g]_e)'=0$. 
\end{proof}

For the following definition, and subsequent formulas, assume without loss of generality that the parametrization of $\Gamma$ is such that for all $\n \in \d \Gamma$, we have $\n = (e,l_e)$ where $e$ is the edge $\n$ belongs to. 

\begin{definition}
Refer to figure \ref{FigTrees}. Let $(e,x) \in \Gamma$, 
\begin{enumerate}
\item The two connected components of $\bar\Gamma \setminus \{(e,x)\}$ are denoted $\bar\Gamma(e,x)$ and $\bar\Lambda(e,x)$ respectively. The point $(e,x)$ is adjoined as a boundary node to $\bar\Gamma(e,x)$ and $\bar\Lambda(e,x)$. By convention, $\bar\Gamma(e,x)$ is taken as the tree that contains the node $(e,0)$. There is an edge denoted by $e$ in both $\bar \Gamma(e,x)$ and $\bar \Lambda(e,x)$; it is parametrized as the intervals $(0,x)$ and $(x,l_e)$ respectively.  

\item For an edge $e$, $\bar\Gamma(e) := \cup_{x\in(0,l_e)}\bar\Gamma(e,x)$, and $\bar\Lambda(e) := \cup_{x\in(0,l_e)}\bar\Lambda(e,x)$. 
\item As in the case of the full tree, $\Gamma(e,x)$ -- without the bar -- denotes the collection of points inside edges of $\bar\Gamma(e,x)$. Similarly for $\Lambda(e,x), \Gamma(e), \Lambda(e)$.
\item $\E_0(\Gamma(e))$ is the set of functions $f \in \D_p^2(\Gamma(e)) \cap \C(\bar\Gamma(e)) \cap F_\rho(\Gamma(e))$ such that $f(\n) = 0$ for $\n \in \d\Gamma(e) \setminus \{(e,l_e)\}$. 
\item Similarly, $\E_0(\Lambda(e))$ is comprised of functions $f \in \D_p^2(\Lambda(e)) \cap \C(\bar \Lambda(e)) \cap F_\rho(\Lambda(e))$ such that $f(\n) = 0$ for $\n \in \d \Lambda(e) \setminus \{(e,0)\}$. 
\end{enumerate}
\end{definition}

\begin{center}
\begin{figure}
\includegraphics[scale = 1]{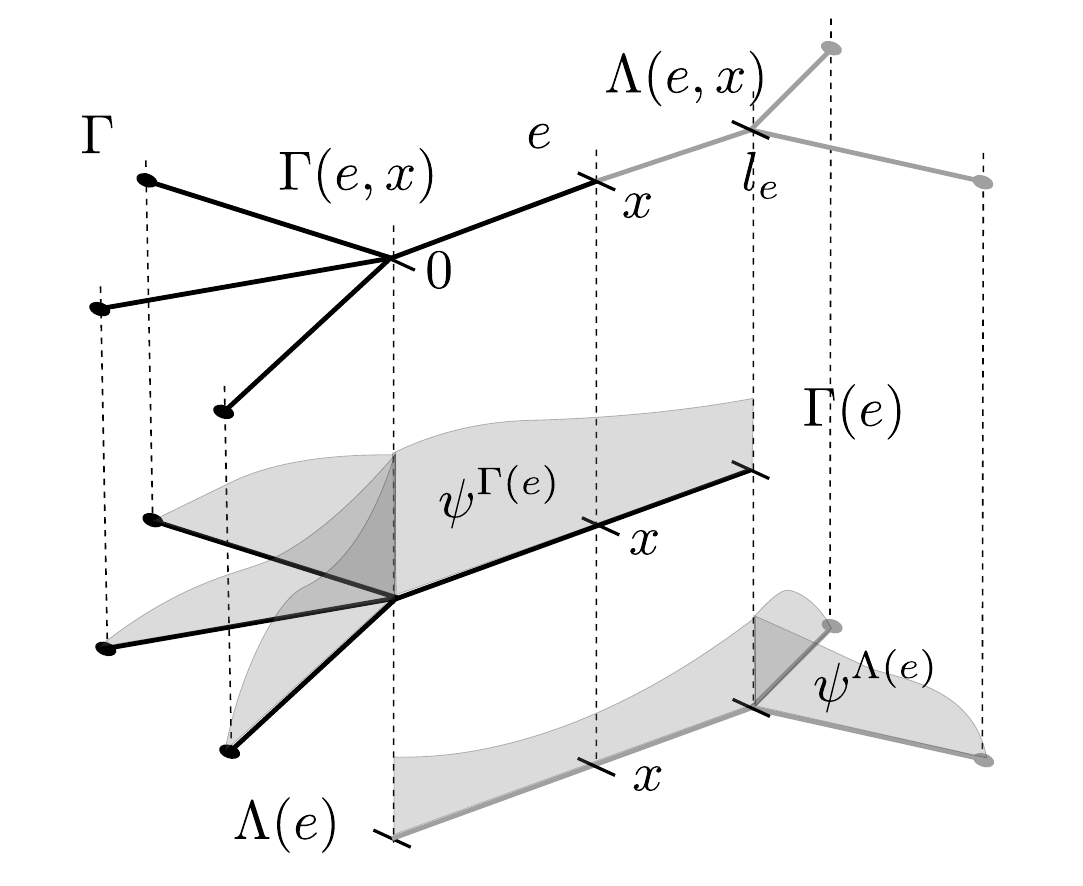}
\caption{Schematic representation of a tree $\Gamma$ with six edges. For $e$ and $x$ as shown, the black sub-tree on the upper figure in $\Gamma(e,x)$, the gray sub-tree is $\Lambda(e,x)$. The middle and lower figures depict trees $\Gamma(e)$ and $\Lambda(e)$ respectively with a schematization of the functions $\psi^{\Gamma(e)}$ and $\psi^{\Lambda(e)}$.}\label{FigTrees}
\end{figure}
\end{center}

\begin{lemma}\label{lemmaPsis}
Let $e$ be a fixed edge. If problem \emph{\refp{semihomo}} is non-degenerate, then there exists solutions $\psi^{\Gamma(e)} \in \E_0(\Gamma(e))$, $\psi^{\Lambda(e)} \in \E_0(\Lambda(e))$ to $\L \psi^{\Gamma(e)} = 0$ on $\Gamma(e)$,  and $\L \psi^{\Lambda(e)} = 0$ on $\Lambda(e)$. These functions can further be chosen so that $p W[\psi^{\Gamma(e)},\psi^{\Lambda(e)}] = -1$ on $e$.
\end{lemma}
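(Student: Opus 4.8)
The plan is to produce the two functions separately by an elementary dimension count, and then to obtain the Wronskian normalization by a gluing argument that uses non-degeneracy in an essential way. First I would establish existence of a nontrivial $\psi^{\Gamma(e)}$. The subtree $\Gamma(e)$ is itself a finite tree; if it carries $m_1$ edges, then the solution space of $\L\psi = 0$ on $\Gamma(e)$ has dimension $2m_1$, spanned by the edgewise fundamental solutions constructed in Section~\ref{SecGeneralSL}. Membership in $\E_0(\Gamma(e))$ imposes continuity and the weighted-flux identity at every internal node together with $\psi(\n)=0$ at every boundary node of $\Gamma(e)$ except the distinguished node $(e,l_e)$. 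A handshaking count identical to the one that yields exactly $2m$ functionals for the full tree shows that imposing Dirichlet at \emph{all} boundary nodes of $\Gamma(e)$ would amount to $2m_1$ linear conditions; dropping the single condition at $(e,l_e)$ leaves $2m_1-1$ homogeneous conditions on a $2m_1$-dimensional space. A homogeneous system with fewer equations than unknowns has nontrivial kernel, so a nonzero $\psi^{\Gamma(e)}\in\E_0(\Gamma(e))$ with $\L\psi^{\Gamma(e)}=0$ exists; the symmetric argument on $\Lambda(e)$ (dropping the condition at $(e,0)$) produces $\psi^{\Lambda(e)}$. Note that non-degeneracy is not needed for this step.

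Next, since both functions satisfy $\L\psi = 0$ on edge $e$, Lemma~\ref{LemmaW}(\textit{e}) shows that $pW[\psi^{\Gamma(e)},\psi^{\Lambda(e)}]$ is constant on $e$. The remaining content is therefore to prove this constant is nonzero, for then I may rescale $\psi^{\Lambda(e)}$ by a constant so that $pW[\psi^{\Gamma(e)},\psi^{\Lambda(e)}] = -1$ on $e$; such rescaling stays within $\E_0(\Lambda(e))$ and preserves $\L\psi^{\Lambda(e)}=0$.

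The heart of the argument, and the step I expect to be the main obstacle, is the non-vanishing of this constant. I would argue by contradiction. If $pW[\psi^{\Gamma(e)},\psi^{\Lambda(e)}]\equiv 0$ on $e$, then by the standard Wronskian criterion for the second-order equation on $e$ the two functions are linearly dependent there, so there exist constants $(a,b)\neq(0,0)$ with $a\psi^{\Gamma(e)} + b\psi^{\Lambda(e)} \equiv 0$ on $e$. Define a global $\Psi$ on $\Gamma$ by $\Psi = a\psi^{\Gamma(e)}$ on $\Gamma(e)$ and $\Psi = -b\psi^{\Lambda(e)}$ on $\Lambda(e)$; the linear relation makes the two definitions agree on the shared edge $e$, so $\Psi$ is well defined and satisfies $\L\Psi = 0$ on all of $\Gamma$. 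The task is then to verify $\Psi \in \E(\Gamma)$, i.e. that $\Psi$ meets every node condition of the full problem. Away from the two cut nodes this is immediate: each internal node of $\Gamma$ lies inside exactly one subtree, where the relevant summand of $\Psi$ already satisfies continuity and the weighted-flux identity through its membership in $F_\rho$, and each boundary node of $\Gamma$ other than $(e,0),(e,l_e)$ lies on the Dirichlet side of a subtree, where $\Psi$ vanishes.

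It remains to check the two cut nodes, which is where the edge cases hide. At $(e,l_e)$, if this is an internal node of $\Lambda(e)$, then $-b\psi^{\Lambda(e)}$ already enforces continuity and flux matching over edge $e$ together with the upstream edges, and the linear relation transfers the edge-$e$ values to those supplied by the $\Gamma(e)$-side; in the degenerate case $(e,l_e)\in\d\Gamma$ the subtree $\Lambda(e)$ reduces to $e$ alone and the condition $\psi^{\Lambda(e)}(e,l_e)=0$ forces $\Psi(e,l_e)=0$, so the Dirichlet requirement is met automatically. The node $(e,0)$ is handled symmetrically using $\psi^{\Gamma(e)}$. Hence $\Psi$ solves the homogeneous problem \refp{homo}, and $\Psi\not\equiv 0$ because whichever of $a,b$ is nonzero makes the corresponding summand a nonzero multiple of a nontrivial $\psi$. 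This contradicts non-degeneracy, so the constant is nonzero and the normalization $pW[\psi^{\Gamma(e)},\psi^{\Lambda(e)}] = -1$ can be arranged.
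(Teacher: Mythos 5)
Your proof is correct, but it reaches the existence part by a genuinely different route than the paper. The paper never counts dimensions on the subtrees: it invokes non-degeneracy from the very start, choosing a boundary node $\n'$ of $\Gamma$ lying either on $e$ itself or outside $\d\Gamma(e)$, reordering the functionals so that $\l_1[f] = f(\n')$, and solving $\Delta a = \varepsilon^{(1)}$ to obtain a single \emph{global} solution $\psi$ of $\L[\psi]=0$ on all of $\Gamma$ satisfying every node condition except $\psi(\n')=1$; the function $\psi^{\Gamma(e)}$ is then defined as the restriction of $\psi$ to $\Gamma(e)$, and similarly for $\psi^{\Lambda(e)}$. Your handshaking count ($2m_1-1$ homogeneous linear conditions on a $2m_1$-dimensional solution space) is more elementary and gives a sharper accounting: it shows non-degeneracy is needed only to rule out a vanishing Wronskian, not for existence. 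What the paper's heavier construction buys is computational, as its subsequent remark makes explicit: one global $\psi$ per boundary node simultaneously furnishes $\psi^{\Gamma(e)}$ or $\psi^{\Lambda(e)}$ for many different edges $e$, so the system $\Delta a = \varepsilon^{(1)}$ must be solved only $\#\d\Gamma$ times in total, whereas your kernel computation would have to be repeated separately for each of the subtrees $\Gamma(e)$, $\Lambda(e)$. The non-vanishing step, by contrast, is essentially the paper's own argument: the paper glues $f := \psi^{\Gamma(e)} \IND{\Gamma(e)} + k\, \psi^{\Lambda(e)} \IND{\Gamma(e)^c}$ and contradicts non-degeneracy, writing the linear dependence as $\psi^{\Gamma(e)}_e = k \psi^{\Lambda(e)}_e$ with $k\neq 0$. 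Your $(a,b)$ formulation of the dependence, together with the explicit check at the two cut nodes, is in fact slightly more careful than the paper's, since it remains valid in the degenerate case where one of the two functions vanishes identically on $e$, which the paper's choice of a nonzero $k$ tacitly excludes.
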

\begin{proof}
If $e$ contains no nodes in $\d\Gamma$, choose a node $\n' \notin \d \Gamma(e)$ and let $e'$ be its edge. If $e$ contains a node in $\d \Gamma$, make $\n'$ equal to that node, and $e' =e$.  Rearrange the basis $\{\vp_1,\dots, \vp_{2m}\}$ so that $\vp_1$ and $\vp_2$ are supported on $e'$, and the functionals $\{\l_1,\dots, \l_{2m}\}$ so that $\l_1[f] = f(\n')$. Let $\Delta$ be the matrix defined in section \refp{SecGeneralSL}. By the nondegeneracy of problem \refp{semihomo},  there exist a solution $a=(a_1,\dots,a_{2m})$ to $\Delta a = \varepsilon^{(1)}$, where $\varepsilon^{(1)}$ denotes the $\R^{2m}$ vector that has one in the first coordinate, and zero elsewhere. The function $\psi = \sum_{i=1}^{2m} a_i \vp_i$ is a solution in $\D_p^2(\Gamma) \cap \C(\bar \Gamma)  \cap F_\rho(\Gamma)$ to $\L[\psi] = 0$ on all of $\Gamma$ and such that that $\psi(\n) = 0$ for $\n \in \d(\Gamma) \setminus \{\n'\}$. The restriction of $\psi$ to $\Gamma(e)$ serves as the required function $\psi^{\Gamma(e)}$. A similar construction applies for $\psi^{\Lambda(e)}$. By Lemma \ref{LemmaW}, $p W[\psi^{\Gamma(e)},\psi^{\Lambda(e)}]$ is constant on $e$, and the desired normalization can be achieved if this constant is not zero. Assume on the contrary that $p W[\psi^{\Gamma(e)},\psi^{\Lambda(e)}]=0$ on $e$. Since the Wronskian vanishes, there is $k \neq 0$ such that $\psi^{\Gamma(e)}_e = k \psi^{\Lambda(e)}_e$. The function $f := \psi^{\Gamma(e)} \IND{\Gamma(e)} + k \psi^{\Lambda(e)} \IND{\Gamma(e)^c}$ would then be a solution to the homogenous problem \refp{homo} violating the assumption of non-degeneracy.      
\end{proof}

\begin{remark}
The computation of the $\psi^{\Lambda(e)}, \psi^{\Gamma(e)}$ can be performed quite inexpensively. For a boundary node $\n' = (e',l_{e'}) \in \d \Gamma$, the solution $\psi$ constructed in the proof of lemma \ref{lemmaPsis} can be restricted to define $\psi^{\Gamma(e)}$ for all nodes $e$ such that either $e=e'$ or $e'$ does not belong to $\Gamma(e)$. Similarly it can be used to define $\psi^{\Lambda(e)}$ for all nodes $e$ such that $e'$ does not belong to $\Lambda(e)$. This implies that the linear system $\Delta a = \varepsilon^{(1)}$ has to be solved only $\#\d\Gamma$ times.
\end{remark}

The specific form of the Green's function can now be written.

\begin{theorem}\label{TheoGreen}
Assume problem \refp{semihomo} is non-degenerate. The following function is a Green's function for operator $\L$,
\begin{equation}\label{G}
G(x,y) = \frac{1}{\rho_e} \times
\begin{cases}
\psi^{\Gamma(e)}(y) \: \psi^{\Lambda(e)}(x), & y \in \Gamma(e,x)\\
\psi^{\Lambda(e)}(y) \: \psi^{\Gamma(e)}(x), & y \in \Lambda(e,x)
\end{cases}, \quad x \in e.
\end{equation}
Moreover, this function is unique in the class of continuous functions on $\Gamma \time \Gamma$ that are continuous with respect to the first variable.
\end{theorem}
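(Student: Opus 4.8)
The plan is to verify directly that $f(x):=\int_\Gamma G(x,y)h(y)\ud\rho$ solves the semi-homogeneous problem \refp{semihomo} (that is, $\L[f]=h$ on each edge and $f\in\E(\Gamma)$), and then to read off uniqueness from non-degeneracy. Fix an edge $e$ and a point $x\in e$. Splitting $\Gamma$ at $x$ into $\Gamma(e,x)$ and $\Lambda(e,x)$ and isolating the two subtrees hanging off the nodes $(e,0)$ and $(e,l_e)$ (whose contributions do not depend on $x$), I would record
\begin{equation*}
\rho_e f_e(x)=\psi^{\Lambda(e)}(x)\Big(C_\Gamma+\int_0^x\psi^{\Gamma(e)}h\ud\rho\Big)+\psi^{\Gamma(e)}(x)\Big(C_\Lambda+\int_x^{l_e}\psi^{\Lambda(e)}h\ud\rho\Big),
\end{equation*}
with $C_\Gamma,C_\Lambda$ the $x$-independent subtree integrals. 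Differentiating once, the two boundary terms produced by the moving endpoint cancel in $f_e'$; differentiating again and using $(p\psi')'=q\psi$ (which is exactly $\L\psi=0$) together with the normalization $pW[\psi^{\Gamma(e)},\psi^{\Lambda(e)}]=-1$ of Lemma \ref{lemmaPsis}, the one surviving boundary term contributes precisely $-h_e$, so that $\L[f]_e=h_e$ on every edge. I expect this classical Lagrange computation to be routine; the factor $1/\rho_e$ in \refp{G} is what makes it cancel against $\ud\rho$.

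It remains to check that $f\in\E(\Gamma)$. The Dirichlet conditions are immediate: as $x$ approaches a leaf $\n=(e,l_e)$, the integral over $\Lambda(e,x)$ collapses and the surviving factor is $\psi^{\Lambda(e)}(\n)$, which vanishes because $\psi^{\Lambda(e)}\in\E_0(\Lambda(e))$ is zero on $\d\Lambda(e)\setminus\{(e,0)\}$; hence $f(\n)=0$. The continuity and weighted-flux conditions \refp{funcCont}--\refp{funcFlux} at internal nodes are the substantive part, and I would treat them at the level of the kernel. For fixed $y$ the map $x\mapsto G(x,y)$ satisfies $\L_xG(\cdot,y)=0$ away from $x=y$, and on each of the two components of $\bar\Gamma\setminus\{y\}$ it coincides, up to an $x$-independent scalar, with a single admissible solution from Lemma \ref{lemmaPsis}; since those solutions already lie in $\C(\bar\Gamma)\cap F_\rho(\Gamma)$, they meet the continuity and flux conditions at every internal node of $\Gamma$. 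Integrating $x\mapsto G(x,y)$ against $h\ud\rho$ (differentiating under the integral, justified by the uniform regularity of the $\psi$'s on the compact tree) then transfers both homogeneous conditions to $f$.

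The main obstacle is exactly the gluing claim inside the previous paragraph: the functions $\psi^{\Gamma(e)},\psi^{\Lambda(e)}$ are defined and normalized edge by edge, so I must show that the pieces $\tfrac1{\rho_e}\psi^{\Lambda(e)}(y)\,\psi^{\Gamma(e)}(\cdot)$ (and their $\Gamma/\Lambda$ mirror image) assemble into one function on each component of $\bar\Gamma\setminus\{y\}$. Two facts should drive this. First, $G$ is insensitive to the individual scalings of $\psi^{\Gamma(e)}$ and $\psi^{\Lambda(e)}$, since only their product enters \refp{G} and the normalization pins down $pW$; here the Wronskian identities of Lemma \ref{LemmaW} (in particular the constancy of $pW$ in part (e)) are used. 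Second, by the Remark following Lemma \ref{lemmaPsis} a single global solution restricts to furnish $\psi^{\Gamma(e)}$ simultaneously for all edges $e$ with $e'\notin\Gamma(e)$ (and symmetrically for $\psi^{\Lambda(e)}$), so on a given component the pieces entering $G(\cdot,y)$ are literally restrictions of one global solution and inherit its nodal behaviour, while Lemma \ref{LemmaW}(c) certifies that the flux functional vanishes at each internal node. Carrying out this bookkeeping, and in particular verifying that the relevant scalar coefficient is the same for every edge of one component, is the delicate step.

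For uniqueness, suppose $G_1,G_2$ both satisfy the statement. By non-degeneracy the solution of \refp{semihomo} is unique for every admissible $h$, so $\int_\Gamma\big(G_1-G_2\big)(x,y)h(y)\ud\rho=0$ for all such $h$ and all $x$. Fixing $x$ and noting that $y\mapsto(G_1-G_2)(x,y)$ is continuous while $h$ ranges over $\mathrm{Ran}(\L)=\C(\Gamma)$, which is dense in $L^2(\Gamma,\rho)$, the fundamental lemma of the calculus of variations forces $(G_1-G_2)(x,\cdot)\equiv0$; as $x$ was arbitrary, $G_1\equiv G_2$.
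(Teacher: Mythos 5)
There is a genuine gap, and you flag it yourself: the ``gluing claim'' --- that for fixed $y$ the per-edge expressions $\tfrac{1}{\rho_e}\psi^{\Gamma(e)}(y)\,\psi^{\Lambda(e)}(x)$ and their mirror images assemble, on each component of $\bar\Gamma\setminus\{y\}$, into a single function that is continuous and flux-matched at every internal node --- is never proved; you list ingredients and then state that carrying out the bookkeeping ``is the delicate step.'' But this claim is the entire content of your forward verification: without it you cannot conclude $\mathcal G h\in\E(\Gamma)$, hence neither that $\mathcal G h$ solves \refp{semihomo} nor that $\text{Ran}(\L)=\C(\Gamma)$ (your uniqueness argument quietly assumes the latter; it can alternatively be cited from the general theory of Section \ref{SecGeneralSL}, but you tie it to your construction). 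The sketch does not close the gap: the Remark after Lemma \ref{lemmaPsis} lets one global solution furnish, say, the functions $\psi^{\Gamma(e)}$ for many edges at once, but the companion functions $\psi^{\Lambda(e)}$ must then be rescaled edge by edge to meet the normalization $pW[\psi^{\Gamma(e)},\psi^{\Lambda(e)}]=-1$, and these rescalings enter $G$ multiplied by the edge-dependent factors $\psi^{\Gamma(e)}(y)/\rho_e$. Showing that all these scalars conspire to give continuity and flux matching at each node is not routine bookkeeping from Lemma \ref{LemmaW}(c) and (e); it is essentially equivalent to proving the symmetry of $G$, whose natural proof is itself an integration-by-parts argument.

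The paper sidesteps this entirely by arguing in the opposite direction: take $h\in\text{Ran}(\L)$ and let $f\in\E(\Gamma)$ be the already-admissible solution of $\L[f]=h$; then integrate Lagrange's identity \refp{Lagranges} for the pair $(\psi^{\Gamma(e)},f)$ over $\Gamma(e,x)$ and for $(\psi^{\Lambda(e)},f)$ over $\Lambda(e,x)$. Parts (b) and (c) of Lemma \ref{LemmaW}, applied to $f$ and $\psi$ whose node conditions are \emph{known}, cancel every boundary term except the one at the cut point, giving
\begin{equation*}
\int_{\Gamma(e,x)}\psi^{\Gamma(e)}h\ud\rho=-p_e(x)\rho_e W[\psi^{\Gamma(e)},f]_e(x),
\qquad
\int_{\Lambda(e,x)}\psi^{\Lambda(e)}h\ud\rho=p_e(x)\rho_e W[\psi^{\Lambda(e)},f]_e(x);
\end{equation*}
multiplying by $\psi^{\Lambda(e)}(x)$ and $\psi^{\Gamma(e)}(x)$, adding, and using Lemma \ref{LemmaW}(d) with the normalization yields $f(x)=\int_\Gamma G(x,y)h(y)\ud\rho$ directly. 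No admissibility of $G(\cdot,y)$ in the first variable is ever needed, because the node conditions that kill the boundary terms are those of $f$. Your edge computation does appear in the paper (equation \refp{computeLf}), but only afterwards, to show $\text{Ran}(\L)=\C(\Gamma)$ for the uniqueness statement. To rescue your forward approach you would have to prove the gluing claim as a standalone lemma (for instance via uniqueness up to scalar of elements of $\E_0(\Gamma(e))\cap\ker\L$, tracking the scalars across every node); the paper's backward argument is both shorter and avoids the issue.
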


\begin{proof}
Let $h \in \text{Ran}(\L)$, and $f \in \E(\Gamma)$ a solution to $\L[f] = h$. Fix an edge $e$, and $x\in e$. Applying  Lagrange's identity \refp{Lagranges} for $\psi^{\Gamma(e)}$ and $f$ and integrating over $\Gamma(e,x)$ with respect to the measure $\rho$ gives
$$\int\limits_{\Gamma(e,x)} \psi^{\Gamma(e)} h \ud \rho = - \!\!\sum_{a \subset \Gamma(e,x)} \!\! \left( p_aW[\psi^{\Gamma(e)},f]_a \, \rho_a \right|_0^{l_a},$$
where the sum on the right hand side is taken over all edges $a$ of $\Gamma(e,x)$. Parts (\textit{b}) and (\textit{c}) of lemma \ref{LemmaW} ensure that all terms in the sum cancel except for the value at $(e,x)$,
\begin{equation}\label{int1}
\int\limits_{\Gamma(e,x)} \psi^{\Gamma(e)} h \ud \rho = - p_e(x) \rho_e W[\psi^{\Gamma(e)},f]_e(x).
\end{equation}
Similarly, Lagrange's identity for $\psi^{\Lambda(e)}$ and $f$, gives
\begin{equation}\label{int2}
\int\limits_{\Lambda(e,x)} \psi^{\Lambda(e)} h \ud \rho =  p_e(x) \rho_e W[\psi^{\Lambda(e)},f]_e(x).
\end{equation}
Multiply equations \refp{int1} and \refp{int2} by $\psi^{\Lambda(e)}(x)$ and $\psi^{\Gamma(e)}(x)$ respectively, add the resulting equations, and apply part (\textit{d}) of lemma \ref{LemmaW} to the right hand side of the result. Finally, since $p W[\psi^{\Gamma(e)},\psi^{\Lambda(e)}] = -1$ on $e$,
\begin{equation}
\begin{split}
\int\limits_{\Gamma(e,x)} \!\! \psi^{\Lambda(e)}(x) \psi^{\Gamma(e)}(y) \, h(y) \ud \rho(y)  \;\; + & \\
 \int\limits_{\Lambda(e,x)} \!\! \psi^{\Gamma(e)}(x) &\psi^{\Lambda(e)}(y) \,  h(y) \ud \rho(y) \;=\; f_e(x) \rho_e.
\end{split}
\end{equation}
Since $\Gamma$ is a disjoint union of $\Gamma(e,x)$ and $\Lambda(e,x)$, the function $G(x,y)$ defined in \refp{G} satisfies definition \refp{DefG}. Let $h \in C(\Gamma)$ arbitrary. It will be establised now that $h \in \text{Ran}(\L)$ simply by showing that $f:= \mathcal G h$ solves $\L[f] = h$. Write
\begin{equation}\label{computeLf}
\begin{split}
f(x)  \;  = \quad &  \psi^{\Lambda(e)}(x) \!\! \int\limits_{\Gamma(e) \smallsetminus e}  \!\! \psi^{\Gamma(e)} h \ud \rho + 
		 \psi^{\Gamma(e)}(x)\!\! \int\limits_{\Lambda(e) \smallsetminus e} \!\! \psi^{\Lambda(e)} h \ud \rho \\
	   & + \; \psi^{\Lambda(e)}(x) \int_0^x \psi^{\Gamma(e)} h \ud\rho + \psi^{\Gamma(e)}(x) \int_x^{l_e} \psi^{\Lambda(e)} h \ud\rho.
\end{split}
\end{equation}
Applying $\L$ to the first two terms in \refp{computeLf} gives zero since $\L[ \psi^{\Lambda(e)}] = \L[ \psi^{\Lambda(e)}] = 0$. A routine calculation finally shows that 
$$ \L[f] = -h p W[\psi^{\Gamma(e)}, \psi^{\Lambda(e)}]_e + \L[\psi^{\Lambda(e)}] \int_0^x \psi^{\Gamma(e)} h \ud\rho
+\L[\psi^{\Gamma(e)}] \int_x^{l_e} \psi^{\Lambda(e)} h \ud\rho $$
which yields $\L[f] = h$. Lastly, the non-degeneracy of problem \refp{semihomo} and the fact that $\text{Ran}(\L) = \C(\Gamma)$, imply the uniqueness of $G$ as stated in the theorem.
\end{proof}
  
\begin{remark}
The construction of the Green's function in Theorem \ref{TheoGreen} has one particular important advantage over the one proposed by \citep{Pokornyi:2004p3263}. In that work, $G(x,y)$ is given as
\begin{equation}\label{Pokornyi}
G(x,y) = H(x,y) - \sum_{i=1}^{2m} \l_i[H(\cdot,y)] \eta_i(x)
\end{equation}
where $H(x,y)$ is equal to the Green's function of operator $\L$ on $(0,l_e)$ if $x,y \in e$, and equal to zero whenever $x$ and $y$ belong to different edges. The functions $\eta_i$ are solutions to $\L[\eta_i] = 0$, $\l_j[\eta_i] = \delta_{ij}$. Note that this formula requires solving $\Delta a = \varepsilon^{(1)}$ a total of $2m$ times to compute $G(x,y)$ at single pair of points $(e_x,x)$, $(e_y,y)$ of $\Gamma$. Via formula \refp{G}, one needs only the functions $\psi^{\Lambda(e_x)}$ and $\psi^{\Gamma(e_x)}$ and therefore, the system $\Delta a = \varepsilon^{(1)}$ must be solved only twice. On the other hand, formula \refp{Pokornyi} has the advantage of using $H$, which is a diagonal fundamental solution to $\L[f]=h$.
\end{remark}  
  
\bibliographystyle{plainnat}  
\bibliography{biblio}

\end{document}